\documentclass[12pt,a4]{amsart}
\usepackage{amsmath,amsthm}
\usepackage[psamsfonts]{amssymb}
\usepackage{color}
\usepackage{epsfig}
\usepackage[colorlinks]{hyperref}
\usepackage{pgf,tikz}
\usepackage{graphicx}

\oddsidemargin 0 cm
\evensidemargin 0 cm
\textwidth 16.5 cm
\topmargin 0 cm
\textheight 22.5 cm

\newtheorem{theorem}{Theorem}
\newtheorem{lemma}[theorem]{Lemma}

\theoremstyle{definition}

\newtheorem{example}[theorem]{Example}
\newtheorem{remark}[theorem]{Remark}
\theoremstyle{remark}

\newcommand{\R}{{\mathbb R}}

\newcommand{\dd}{\hskip0.2mm\mbox{\rm d}}

\numberwithin{equation}{section}
\pagestyle{myheadings}

\begin{document}

\title[New results for the Liebau phenomenon]{New results for the Liebau phenomenon\\ via fixed point index}

\subjclass[2010]{Primary 34B18, secondary 34B27, 34B60.}
\keywords{Valveless pumping; Periodic boundary value problem; Cone;
          Fixed point index; Green's function.}

\author{J. A. Cid}
\address{Jos{\'e} {\'A}ngel Cid, Departamento de Matem\'aticas, Universidade de Vigo,
32004, Pabell\'on 3, Campus de Ourense, Spain} \email{angelcid@uvigo.es}%

\author{G. Infante}
\address{Gennaro Infante, Dipartimento di Matematica e Informatica,
Universit\`{a} della Calabria, 87036 Arcavacata di Rende, Cosenza, Italy}
\email{gennaro.infante@unical.it}%

\author{M. Tvrd\'y}
\address{Milan Tvrd\'y, Mathematical Institute, Czech Academy of Sciences,
CZ~115~67 Praha~1, \v{Z}itn\'{a}~25, Czech Republic}
\email{tvrdy@math.cas.cz}%

\author{M. Zima}
\address{Miros\l awa Zima, Department of Differential Equations and Statistics,
Faculty of Mathematics and Natural Sciences, University of Rzesz\'ow,
Pigonia 1, 35-959 Rzesz{\'o}w, Poland.}
\email{mzima@ur.edu.pl}%

\begin{abstract}
We prove new results regarding the existence of positive solutions for a nonlinear
periodic boundary value problem related to the Liebau phenomenon. As a consequence we obtain
new sufficient conditions for the existence of a pump in a simple model. Our
methodology relies on the use of classical fixed point index. Some examples are
provided to illustrate our theory. We improve and complement previous results
in the literature.
\end{abstract}

\maketitle

\section{Introduction}

During the experiments developed in the 1950s, the German cardiologist Gerhart Liebau
observed (see \cite{liebau}) that a periodic compression could produce the circulation
of a fluid in a~mechanical system without valves to ensure the direction of the flow.
This valveless pumping effect is nowadays called the Liebau phenomenon. It was reported to occur
for instance in embryonic blood circulation, in applications of nanotechnology
and in oceanic currents, see e.g. \cite{and,borzipropst,bringley,mwy,propst}
or \cite[Chapter 8]{torres}. In particular, G.~Propst~\cite{propst} presented an explanation
of the pumping effect for flow configurations of several rigid tanks that are connected
by rigid pipes. He proved the existence of periodic solutions to the corresponding
differential equations for systems of 2 or 3 tanks. However, the apparently simplest
configuration consisting of 1 pipe and 1 tank turned out to be, from mathematical point
of view, the most interesting one, as it leads to the singular periodic problem
\begin{equation}\label{eqprobsing}
\left\{\begin{array}{l}
   u''(t)+a\,u'(t)=\displaystyle \frac 1{u(t)}\left(e(t)-b (u'(t))^2\right)-c,\quad t\in [0,T],
  \\[1mm]
   u(0)=u(T),\quad u'(0)=u'(T),
\end{array}\right.
\end{equation}
where $u'$ is the fluid velocity in the pipe (oriented in the direction from
the tank to the piston), $T>0,$
\[
  a=\frac{r_0}{\rho},\quad b=1\,{+}\,\frac{\zeta}{2},\quad
  c=\frac{g A_{\pi}}{A_{\tau}},\quad e(t)=\frac{g\,V_0}{A_\tau}{-}\frac{p(t)}{\rho}\,,
\]
$r_0$ is the friction coefficient, $\rho$ is the density of the fluid, $\zeta\ge 1$ is
the junction coefficient (depending on the particular geometry and smoothness of
the junction of the tank and the pipe), $g$ is the acceleration of gravity, $A_{\tau}$
is the cross section of the tank, $A_{\pi}$ is the cross section of the pipe (small
in comparison with $A_\tau$), $V_0$ is the total volume (assumed to be constant)
of the fluid in the system and $p$ is the $T$-periodic external force. As a result,
from the fluid mechanics point of view, the assumptions
\[
    a\ge 0,\quad b>1,\quad c>0, \quad e \mbox{\ continuous and\ } T-\mbox{periodic}
\]
are quite natural, from $\zeta\ge 1$ we would even have $b\ge 3/2$. Of course, we are interested in the search of positive solutions
of problem \eqref{eqprobsing}. A detailed justification of the model can be also found
e.g. in \cite[Chapter 8]{torres}.

One can observe that if a periodic external force $e$ produces a nonconstant periodic
response $u$ then the mean level of the fluid in the tank is higher than the level produced by
a~constant force with the same mean value. Moreover the increasing of the level is
proportional to $\|u'\|^2.$

The change of variables $u\,{=}\,x^{\,\mu},$ where $\mu\,{=}\,\frac 1{b\,{+}\,1},$ was used
in~\cite{cpt} in order to overcome the singularity, transforming and simplifying problem~\eqref{eqprobsing}
into the regular~BVP
\begin{equation}\label{eqprobreg}
\left\{\begin{array}{l}
  x''(t)+a\,x'(t)
   =\displaystyle\frac{e(t)}{\mu}\,x^{1\,{-}\,2 \mu}(t)-\frac{c}{\mu}\,x^{1\,{-}\,\mu}(t),
  \quad t\in [0,T],
 \\[2mm]
  x(0)=x(T),\quad x'(0)=x'(T),
\end{array}\right.
\end{equation}
where $0<\mu<\tfrac 12.$ By means of the lower and upper solution technique Cid and
co-authors~\cite{cpt} provided results on the existence and stability of a positive solution of \eqref{eqprobreg}.

In our recent paper~\cite{citz} we considered a generalization of problem~\eqref{eqprobreg},
namely
\begin{equation}\label{eqprob}
  \left\{\begin{array}{l}
     x''(t)+a x'(t)=r(t)\,x^{\alpha}(t)-s(t)\,x^{\beta}(t),\ t\in [0,T],
    \\[2mm]
     x(0)=x(T),\quad x'(0)=x'(T),
  \end{array}\right.
\end{equation}
under the assumption
\begin{equation}\tag{H0}
   a\ge 0, \quad  r,s:[0,T]\to\mathbb{R} \mbox{ \ are continuous and} \quad 0<\alpha<\beta<1.
\end{equation}

\noindent

Of course, to extend the obtained solution of the boundary value problem \eqref{eqprob} (on $[0,T]$) to a $T$-periodic solution of the corresponding differential equation, we would have to assume that $r$ and $s$ are also $T$-periodic. Making use of a shifting argument and Krasnosel'ski\u\i{}'s expansion/compression fixed point
theorem on cone, we succeeded in~\cite{citz} to improve the existence results from~\cite{cpt}.

Furthermore, Torres in~\cite[Chapter 8]{torres} obtained a priori bounds for the periodic
solutions of~\eqref{eqprobsing}, which together with the Brouwer degree theoretical arguments,
led to an alternative existence result.

We point out that the assumption
\[
    \min_{t\in [0,T]} e(t)>0
\]
is a common feature of the existence results in~\cite{torres,cpt,citz}. The goal of this paper
is twofold: first, to improve the main results from~\cite{citz} and, second, to obtain explicit
sufficient conditions for the existence of periodic solutions of problem~\eqref{eqprobreg} that allow, for
the first time, the function $e$ to be sign-changing. Similarly to the papers
\cite{gippmt,lan01,klamc04,li,jeff-2,jeff}, our main tool will be the classical fixed point
index on cones.

The paper is organized as follows: in Section 2 we present the shifting argument and recall some
known facts regarding the Green's function of the problem and some properties of the fixed point
index. In Section 3 we perform the fixed point index calculations that we use to prove our main
results. In Section 4 we present the main results, some consequences and illustrative examples.

\section{Preliminaries}
By means of a shifting argument (see~\cite{citz}) problem~\eqref{eqprob} may be rewritten in
the equivalent form
\begin{equation}\label{eqprob+m}
  \left\{\begin{array}{l}
     x''(t)+a\,x'(t)+m^2x(t)=r(t)\,x^{\alpha}(t)-s(t)\,x^{\beta}(t)+m^2x(t),\ t\in [0,T],
    \\[2mm]
     x(0)=x(T),\quad x'(0)=x'(T),
  \end{array}\right.
\end{equation}
with  $m\in\R.$ In the sequel, we denote the right-hand side of the differential equation
in \eqref{eqprob+m} by $f_m,$ i.e.
\begin{equation}\label{fm-def}
   f_m(t,x)=r(t)\,x^{\alpha}-s(t)\,x^{\beta}+m^2 x
   \quad\mbox{for \ } t\in[0,T] \mbox{ \ and\ } x\in [0,+\infty).
\end{equation}

We will assume that the linearization
\begin{equation}\label{eqlinnehom}
  \left\{\begin{array}{l}
     x''(t)+a\,x'(t)+m^2x(t)=h(t),\quad t\in [0,T],
    \\[2mm]
     x(0)=x(T),\quad x'(0)=x'(T),
  \end{array}\right.
\end{equation}
of \eqref{eqprob+m} possesses a positive Green's function. Its existence and its further
needed properties are given by the following lemma.

\begin{lemma}\label{green}
Assume that
\begin{equation}\label{antimax}
  a\ge 0 \mbox{ \ and \ } 0<m<\sqrt{\left(\frac{\pi}{T}\right)^2+\left(\frac{a}{2}\right)^2}\,.
\end{equation}
Then there exists a unique function $G_m(t,s)$ defined on $[0,T]\times[0,T]$ such that
\begin{align}\tag{G1}
   &\hskip-2mm\left\{\begin{array}{l}
      \mbox{For each\ } h \mbox{\ continuous on\ } [0,T], \mbox{\ the function\ }
     \\[2mm]
      \quad\displaystyle x(t)=\int_0^T G_m(t,s)\,h(s)\,\dd s\quad\mbox{for\ } t\in[0,T]
     \\[2mm]
      \mbox{is the unique solution of \eqref{eqlinnehom}}.
    \end{array}\right.
\\[2mm]\tag{G2}
   &G_m>0 \mbox{\ on\ } [0,T]\times[0,T].
\\[2mm]\tag{G3}
   &\int_0^T G_m(t,s)\,\dd s=\frac{1}{m^2} \mbox{ \ for all\ } t\in[0,T].
\\[2mm]\tag{G4}
   &\hskip-2mm\left\{\begin{array}{l}
      \mbox{There exists a constant\ } c_m\in (0,1) \mbox{\ such that\ }
     \\[1mm]
      G_m(t,s)\ge G_m(s,s)\ge c_m\,G_m(t,s)
      \mbox{ \ for all\ } (t,s)\in [0,T]\times[0,T]\,.
   \end{array}\right.
\end{align}
\end{lemma}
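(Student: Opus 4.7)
My plan is to produce an explicit formula for $G_m$ and then read off the four assertions. For (G1), I would fix a fundamental system $\{u_1,u_2\}$ of the homogeneous ODE $x''+ax'+m^2 x=0$ and use the standard Green's function ansatz: $G_m(\cdot,s)$ is a linear combination of $u_1,u_2$ on each of $[0,s]$ and $[s,T]$, with the four coefficients pinned down by continuity at $t=s$, a unit jump in the $t$-derivative there, and the two periodic boundary conditions. The resulting linear system is nonsingular because $m^2>0$ rules out any nontrivial $T$-periodic solution of the homogeneous equation, giving existence and uniqueness of $G_m$. Property (G3) is then a one-line corollary: the constant $x\equiv 1/m^2$ is the unique $T$-periodic solution of the equation driven by $h\equiv 1$, so comparison with the representation formula in (G1) forces $\int_0^T G_m(t,s)\,\dd s = 1/m^2$ for every $t$.

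For (G2), I would consider separately the cases $4m^2>a^2$, $4m^2=a^2$, and $4m^2<a^2$. In the main case $4m^2>a^2$, set $\tilde\omega:=\sqrt{m^2-a^2/4}$, so the hypothesis becomes $\tilde\omega T<\pi$. After simplification, $G_m$ factors as a positive exponential of the form $e^{-a(t-s)/2}$ (or its periodic shift) times a cosine expression essentially of the type $\cos\bigl(\tilde\omega(T/2-|t-s|)\bigr)$ divided by a positive normalizing constant; the bound $\tilde\omega T<\pi$ keeps each factor strictly positive on $[0,T]\times[0,T]$. The remaining cases $4m^2=a^2$ and $4m^2<a^2$ yield elementary formulas involving polynomials and hyperbolic functions for which positivity is immediate.

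The main obstacle is (G4). Using the explicit formula obtained in the analysis of (G2), I would argue that, for each fixed $s$, the map $t\mapsto G_m(t,s)$ is continuous on $[0,T]$, its only interior smooth critical point occurs at the antipodal location $|t-s|=T/2$ (where it is maximal), and the corner at $t=s$ forces $t=s$ to be the global minimum. This yields $G_m(t,s)\geq G_m(s,s)$, the left inequality of (G4); the right inequality then holds with $c_m$ equal to the infimum over $s$ of the ratio of the diagonal value to the antipodal maximum of $G_m(\cdot,s)$, which lies in $(0,1)$ by (G2) and compactness of $[0,T]$. The subtlety introduced by $a>0$ is that $G_m$ is no longer symmetric in $(t,s)$, so the monotonicity argument has to track the exponential and cosine factors together, but this is a routine calculation once the explicit expression is in hand.
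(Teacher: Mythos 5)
The paper does not supply a self-contained proof of this lemma: it simply cites \cite[Proposition 2.2]{omaritromb} for (G1) and (G2), and \cite[Appendix A]{citz} for (G3) and (G4). Your proposal reconstructs a direct argument of essentially the same flavour as those references (explicit Green's function from a fundamental system, case split on the discriminant, one-line verification of (G3) via the constant right-hand side), so the strategy is sound and not genuinely different from what the cited sources do. Two points, however, need tightening. First, your justification that the linear system determining $G_m$ is nonsingular, ``because $m^2>0$ rules out any nontrivial $T$-periodic solution of the homogeneous equation,'' is wrong when $a=0$: there the homogeneous equation $x''+m^2x=0$ has nontrivial $T$-periodic solutions precisely when $m=2\pi k/T$ for some positive integer $k$, so it is the hypothesis $m<\sqrt{(\pi/T)^2+(a/2)^2}$ (not mere positivity of $m^2$) that must be invoked; for $a>0$ your reasoning is fine since all homogeneous solutions then decay. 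Second, in your sketch of (G4) the interior critical point of $t\mapsto G_m(t,s)$ sits at the ``antipodal location $|t-s|=T/2$'' only in the symmetric case $a=0$; for $a>0$ it is shifted. You acknowledge this asymmetry as a ``subtlety,'' but what actually carries the argument is the pair of facts you should make explicit: using translation invariance one may write $G_m(t,s)=g(t-s)$ with $g$ $T$-periodic, and under \eqref{antimax} the function $g$ has exactly one critical point in $(0,T)$ (a maximum), with $g'(0^+)>0>g'(T^-)$ forced by the unit derivative jump; this makes the corner $\tau=0$ the global minimum, giving $G_m(t,s)\ge G_m(s,s)$, and then $c_m=g(0)/\max_\tau g(\tau)\in(0,1)$ works, the infimum over $s$ being superfluous by translation invariance. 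With those repairs the proposal is a correct direct proof.
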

\proof
The existence of a function $G_m$ with the properties \thetag{G1} and \thetag{G2} is given
by \cite[Proposition 2.2]{omaritromb} and for the rest, see \cite[Appendix A]{citz}.
\endproof

Throughout, given $T$, $a$ and $m$ in \eqref{antimax}, we will write $G_m$ and $c_m$ for the corresponding function $G_m$ in (G1) and the corresponding constant $c_m$ in (G4).

Furthermore, let us recall that a \textit{cone} $P$ in a Banach space $X$ is a closed,
convex subset of $X$ such that $P\cap(-P)=\{0\}$ \ and \ $\lambda\,x\,{\in}\,P$ for $x\,{\in}\,P$
and $\lambda\,{\ge}\,0.$ Here we will work in the space $X\,{=}\,{\mathcal C}[0,T]$ of continuous
functions on $[0,T]$ endowed with the usual maximum norm $\|x\|=\max\{|x(t)|\,{:}\,t\in[0,T]\}.$
The properties~of Green's function $G_m$ of \eqref{eqlinnehom} stated in Lemma \ref{green}
enable us to use the cone
\begin{equation}\label{sposcone}
  P=\{x\in X\,{:}\,x(t)\ge c_m\,\|x\| \mbox{ \ for\ } t\in[0,T]\},
\end{equation}
a type of a cone first used by M.A.~Krasnosel'ski\u\i{} and D.~Guo, see for example
\cite{krzab, guolak}.

\vskip2mm

Whenever \eqref{antimax} is true, we define the operator $F\,{:}\,P\to X$ by
\begin{equation}\label{F-def}
  F\,x(t)=\int_0^T G_m(t,s)\,f_m(s,x(s))\,\dd s
  \quad\mbox{for \ } x\,{\in}\,P\mbox{ \ and \ } t\in[0,T]\,,
\end{equation}
where $G_m$ is the corresponding Green's function for \eqref{eqlinnehom}. Then, any fixed
point of $F$ in $P$ is a nonnegative solution of problem~\eqref{eqprob+m} and, simultaneously,
of problem \eqref{eqprob}. In order to obtain the existence of a fixed point of this kind
we make use of the classical fixed point index. For readers' convenience we formulate below
the principles that we will need later.

If $\Omega$ is an open bounded subset of $P$ (in the relative topology) we denote by
$\overline{\Omega}$ and $\partial\,\Omega$ the closure and the boundary relative to $P.$
When $\Omega$ is an open bounded subset of $X$ we write $\Omega_{P}=\Omega\,{\cap}\,P.$
Recall that for a given cone $P,$ an open set $\Omega$ and a compact operator
$F\,{:}\,\overline{\Omega_P}\to P,$ the symbol $i_{P}(F,\Omega_{P})$ stands for the fixed point
index of $F$ with respect to $P$ and $\Omega.$ For the definition and more details concerning
the properties of the fixed point index see e.g. \cite{guolak,amann, deimling, zeidler}. 
The following existence principle is well-known, but we include its short proof for completeness.
\begin{theorem}\label{ex-principle}
Let $P$ be a cone in a Banach space $X.$ Let $\Omega,\,\Omega'\subset X$ be open bounded
sets such that $0\in\Omega'_{P}$ and
\ $\overline{\Omega'_P}\subset\Omega_{P}.$ \ Assume that $F\,{:}\,\overline{\Omega_P}\to P$
is a compact map such that $F\,x\ne x$ for $x\in\partial\,\Omega_{P}\,{\cup}\,\partial\,\Omega'_P,$
\ $i_{P}(F,\Omega_P)=1$ and \ $i_{P}(F,\Omega'_P)=0.$
Then $F$ has a fixed point in $\Omega_P\setminus\overline{\Omega'_P}.$
\end{theorem}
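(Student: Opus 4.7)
The plan is to deduce the result directly from the two standard axiomatic properties of the fixed point index on cones: additivity (excision) and solution existence.

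First I would observe that, since $\overline{\Omega'_P}\subset\Omega_P$ and both sets are open bounded in $P$, the set $U:=\Omega_P\setminus\overline{\Omega'_P}$ is itself an open bounded subset of $P$, and it is disjoint from $\Omega'_P$. The hypothesis $Fx\ne x$ on $\partial\Omega_P\cup\partial\Omega'_P$ guarantees that the fixed point indices $i_P(F,\Omega_P)$, $i_P(F,\Omega'_P)$ and $i_P(F,U)$ are all well defined, because no fixed point of $F$ lies on the boundary of any of the three sets (for $U$, note that $\partial U\subset\partial\Omega_P\cup\partial\Omega'_P$).

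Next I would invoke the additivity property of the fixed point index applied to the decomposition $\Omega_P=\Omega'_P\cup U\cup(\partial\Omega'_P\cap\Omega_P)$ (with the last piece fixed-point-free by assumption), which yields
\begin{equation*}
  i_P(F,\Omega_P)=i_P(F,\Omega'_P)+i_P(F,U).
\end{equation*}
Substituting the given values $i_P(F,\Omega_P)=1$ and $i_P(F,\Omega'_P)=0$ one gets $i_P(F,U)=1\ne 0$.

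Finally, by the solution property (also called the existence property) of the fixed point index, a nonzero index forces the existence of at least one fixed point of $F$ inside $U=\Omega_P\setminus\overline{\Omega'_P}$, which is exactly the assertion of the theorem. There is no genuine obstacle here: the argument is a textbook application of additivity plus existence, and the only thing to be slightly careful about is the bookkeeping that ensures all three indices are well defined, which is handled by the non-fixed-point hypothesis on $\partial\Omega_P\cup\partial\Omega'_P$.
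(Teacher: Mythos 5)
Your argument is correct and follows the same route the paper indicates: additivity of the fixed point index applied to the disjoint open pieces $\Omega'_P$ and $U=\Omega_P\setminus\overline{\Omega'_P}$, followed by the solution (existence) property on $U$ once its index is computed to be $1$. The paper merely cites these two properties (Guo--Lakshmikantham, Theorems 2.3.1 and 2.3.2) rather than spelling out the bookkeeping, so your write-up is a faithful expansion of the intended proof.
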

\noindent{\it Proof} \ It follows from the additivity and solution properties of the fixed point index,
cf. e.g. \cite[Theorem 2.3.1 and Theorem 2.3.2]{guolak}. For an analogous argument, see also e.g.
the proof of Theorem 12.3 in \cite{amann}.
\hfill$\Box$

\vskip2mm

To verify the assumptions of Theorem~\ref{ex-principle}, the following assertion will
be helpful.

\begin{theorem}[\cite{guolak}, Lemma 2.3.1 and Corollary 2.3.1]\label{lemind}
Let $P$ be a cone in a Banach space $X,$ $\Omega$ be an open bounded set such that
$0\in\Omega_{P}$ and let $F\,{:}\,\overline{\Omega_P}\to P$
be compact. Then
\begin{itemize}
\item[(i)] \ If $F\,x\,{\ne}\,\lambda\,x$ for all $x\in\partial\,\Omega_{P}$ and all $\lambda\,{\ge}\,1,$
then $i_{P}(F,\Omega_{P})=1.$
\item[(ii)] \ If there exists $x_0\in P\,{\setminus}\,\{0\}$ such that \ $x\,{-}\,F\,x\,{\ne}\,\lambda\,x_0$
\ for all $x\in\partial\,\Omega_{P}$ and all $\lambda\,{\ge}\,0,$ then $i_{P}(F,\Omega_{P})=0.$
\end{itemize}
\end{theorem}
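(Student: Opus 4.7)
The plan is to deduce both statements from the standard properties of the fixed point index on the cone $P$ (homotopy invariance, normalization, and the solution/existence property), which are axiomatic for the theory developed by Guo and Lakshmikantham. In each case the strategy is to connect $F$ to a map with easily computable index via an admissible compact homotopy.

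For part (i) I would employ the straight-line homotopy $H(x,t)=t\,F\,x$ for $(x,t)\in\overline{\Omega_P}\times[0,1]$. This is admissible because $F$ is compact and $P$ is closed under multiplication by nonnegative scalars, so $H(\cdot,t)\colon\overline{\Omega_P}\to P$ for every $t\in[0,1]$. The key verification is that $H(x,t)\ne x$ on $\partial\Omega_P$: at $t=0$ this follows from $0\in\Omega_P$ (so $0\notin\partial\Omega_P$); for $t\in(0,1]$, the equality $t\,F\,x=x$ rewrites as $F\,x=\lambda\,x$ with $\lambda=1/t\ge 1$, contradicting the hypothesis. By homotopy invariance, $i_P(F,\Omega_P)=i_P(0,\Omega_P)$, and the right-hand side equals $1$ by the normalization property applied to the zero map, whose only fixed point is $0\in\Omega_P$.

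For part (ii) I would fix the given $x_0\in P\setminus\{0\}$ and use the affine homotopy $H(x,\lambda)=F\,x+\lambda\,x_0$ on $\overline{\Omega_P}\times[0,\Lambda]$, with $\Lambda>0$ to be chosen. The hypothesis $x-F\,x\ne\lambda\,x_0$ for $x\in\partial\Omega_P$ and $\lambda\ge 0$ states precisely that $H(\cdot,\lambda)$ has no fixed point on $\partial\Omega_P$ for any $\lambda\in[0,\Lambda]$, so the homotopy is admissible. The crucial additional step is to choose $\Lambda$ so large that $H(\cdot,\Lambda)$ has no fixed point in the whole of $\overline{\Omega_P}$: since $\overline{\Omega_P}$ is bounded and $F(\overline{\Omega_P})$ is relatively compact (hence bounded), any fixed point $x=F\,x+\Lambda\,x_0$ would satisfy $\Lambda\,\|x_0\|\le\|x\|+\|F\,x\|\le M$ with $M$ independent of $\Lambda$. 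Taking $\Lambda>M/\|x_0\|$, the solution property yields $i_P(H(\cdot,\Lambda),\Omega_P)=0$, and homotopy invariance then gives $i_P(F,\Omega_P)=i_P(H(\cdot,0),\Omega_P)=0$.

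The argument is essentially a textbook application of the axioms of the fixed point index on a cone; the only mildly delicate point is in part (ii), where one must verify that the affine homotopy can be extended to a parameter range large enough to push all fixed points outside $\overline{\Omega_P}$. This is secured by the boundedness of $F(\overline{\Omega_P})$, which is free of charge from the compactness of $F$, so I do not expect any genuine obstacle.
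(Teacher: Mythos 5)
Your argument is correct and is the standard one: for (i) the linear homotopy $t\,F$ to the zero map plus normalization, and for (ii) the affine homotopy $F+\lambda\,x_0$ together with the a priori bound (from boundedness of $\overline{\Omega_P}$ and compactness of $F$) that forces the index to vanish for large $\lambda$. The paper itself gives no proof here but simply cites Guo--Lakshmikantham, Lemma 2.3.1 and Corollary 2.3.1, whose proofs proceed exactly along these lines, so your reconstruction matches the intended argument.
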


We will complete this section by introducing further notations needed later:

\vskip2mm

\noindent
For a given continuous function $h\,{:}\,[0,T]\to\R$ we denote
\begin{align*}
  &\overline{h}=\frac{1}{T}\int_0^Th(s)\,\dd s,\quad h_*=\min\{h(t)\,{:}\,t\in[0,T]\},\quad
  h^*=\max\{h(t)\,{:}\,t\in[0,T]\},
 \\[-3mm]\noalign{\noindent\mbox{and}}
  &h_+(t)=\max\{h(t),0\}\quad\mbox{for}\quad t\in[0,T].
\end{align*}

\section{Calculations of the fixed point index}

We will assume that \thetag{H0} holds and
\begin{equation}\tag{H1}
\left\{\begin{array}{l}
  \mbox{there are\ } m>0,\,R_2>0 \mbox{\ and\ } R_1\in(0,R_2) \mbox{\ such that\ }
\\[2mm]
  m^2<\left(\displaystyle\frac{\pi}{T}\right)^2+\left(\displaystyle\frac{a}{2}\right)^2
  \mbox{ \ and \ }
  f_m(t,x)\ge 0 \mbox{ \ for \ } t\in[0,T] \mbox{\ and\ } x\in[c_m\,R_1,R_2],
\end{array}\right.
\end{equation}
where $f_m$ is given by \eqref{fm-def}. Let $\widetilde{f}_m\,{:}\,[0,T]\times[0,R_2]\to\mathbb{R}$
be an arbitrary continuous and nonnegative function which coincides with $f_m$ on
$[0,T]\times[c_m\,R_1,R_2].$

For $\rho>0$ and $P$ as in \eqref{sposcone}, we define
\[
   B_\rho=\{x\,{\in}\,P\,{:}\, \|x\|\,{<}\,\rho\}
   \quad\mbox{and}\quad
   B'_\rho=\{x\,{\in}\,P\,{:}\, x_*\,{<}\,c_m\,\rho\}.
\]
Let us note that the sets of the form $B'_\rho$ were introduced and utilized already by Lan
in \cite{lan01}. Note that we have $B'_\rho\subset B_\rho.$

We will consider the operator
\begin{equation}\label{wtF}
  \widetilde{F}x(t)=\int_0^T G_m(t,s)\,\widetilde{f}_m(s,x(s))\,\dd s
  \quad\mbox{for \ } x\in\overline{B_{R_2}}\,.
\end{equation}
where $G_m$ is Green's function of \eqref{eqlinnehom} whose existence and properties are
given by Lemma~\ref{green}.

The main object of this Section is to yield sufficient conditions for the fixed point
index of $\widetilde{F}$ to be 1 or 0. In order to do this, we utilize, in the spirit
of the papers by Lan~\cite{klamc04} and Webb ~\cite{jeff}, the explicit dependence
of the nonlinearity $f_m$ on $t.$

\begin{lemma}\label{Fpositive}
Assume \thetag{H0} and \thetag{H1}. Then the operator $\widetilde{F}$ maps $\overline{B_{R_2}}$
into $P$ and is compact.
\end{lemma}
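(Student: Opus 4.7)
The plan is to verify the two assertions separately: first that $\widetilde{F}(\overline{B_{R_2}})\subset P$, then that $\widetilde{F}$ is a compact map. Both parts rely crucially on the nonnegativity of $\widetilde{f}_m$ (built into its construction) and on the Green's function properties collected in Lemma~\ref{green}.

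For the cone-invariance, I would first fix $x\in\overline{B_{R_2}}$ and observe that, since $\widetilde{f}_m(s,x(s))\ge 0$ for all $s\in[0,T]$, property (G4) can be applied pointwise inside the integral. The key inequality to extract from (G4) is that $G_m(t,s)\ge c_m\, G_m(\tau,s)$ for every $t,\tau,s\in[0,T]$: indeed, $G_m(t,s)\ge G_m(s,s)$ and $G_m(s,s)\ge c_m\, G_m(\tau,s)$ chain together. Integrating against the nonnegative function $s\mapsto\widetilde{f}_m(s,x(s))$ then gives
\[
  \widetilde{F}x(t)\ge c_m\int_0^T G_m(\tau,s)\,\widetilde{f}_m(s,x(s))\,\dd s = c_m\,\widetilde{F}x(\tau)
\]
for all $t,\tau$. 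Taking the supremum over $\tau$ yields $\widetilde{F}x(t)\ge c_m\|\widetilde{F}x\|$, which is the defining inequality of $P$ in \eqref{sposcone}. Continuity of $\widetilde{F}x$ on $[0,T]$ follows from continuity of $G_m$, so $\widetilde{F}x\in P$.

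For compactness, I would rely on the Arzel\`a--Ascoli theorem. Since $\widetilde{f}_m$ is continuous on the compact set $[0,T]\times[0,R_2]$, it is bounded by some constant $M$, and similarly $G_m$ is bounded and uniformly continuous on $[0,T]\times[0,T]$. Uniform boundedness of $\widetilde{F}(\overline{B_{R_2}})$ is then immediate from $|\widetilde{F}x(t)|\le M\int_0^T G_m(t,s)\,\dd s = M/m^2$ by (G3). Equicontinuity follows from the uniform continuity of $G_m$: for $t_1,t_2\in[0,T]$,
\[
  |\widetilde{F}x(t_1)-\widetilde{F}x(t_2)|\le M\int_0^T|G_m(t_1,s)-G_m(t_2,s)|\,\dd s,
\]
with a right-hand side independent of $x$ that tends to $0$ as $|t_1-t_2|\to 0$. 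Continuity of $\widetilde{F}$ itself is a standard consequence of the uniform continuity of $\widetilde{f}_m$ on $[0,T]\times[0,R_2]$ combined with the dominated convergence theorem (or directly with the boundedness of $G_m$).

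There is really no substantive obstacle here: the argument is routine once one notices that the cone inequality comes out of (G4) applied under the integral sign, which is legal precisely because $\widetilde{f}_m$ was chosen to be nonnegative on $[0,T]\times[0,R_2]$. The only point requiring a small amount of care is the switch between ``$\ge c_m G_m(t,s)$ on the right'' and ``$\ge c_m G_m(\tau,s)$'' versions of (G4); this is what allows the integrated inequality to be compared against the supremum norm of $\widetilde{F}x$ rather than only against a single evaluation of it.
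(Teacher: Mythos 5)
Your argument is correct and follows essentially the same route as the paper's proof: chaining the two inequalities in (G4) under the integral (using the nonnegativity of $\widetilde{f}_m$) to obtain the cone inequality $\widetilde{F}x(t)\ge c_m\|\widetilde{F}x\|$, and then invoking Arzel\`a--Ascoli for compactness. Your use of the separate dummy variable $\tau$ makes explicit a step the paper leaves slightly implicit, but the substance is identical.
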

\begin{proof}
For $x\in\overline{B_{R_2}}$ and $t\in[0,T],$ we have by Lemma \ref{green}
and construction of $\widetilde{f}_m$
\[
  \widetilde{F}x(t)=\int_0^TG_m(t,s)\,\widetilde{f}_m(s,x(s))\,\dd s
  \ge\int_0^TG_m(s,s)\,\widetilde{f}_m(s,x(s))\,\dd s
\]
and
\[
  \int_0^TG_m(s,s)\,\widetilde{f}_m(s,x(s))\,\dd s
  \ge c_m\int_0^TG_m(t,s)\,\widetilde{f}_m(s,x(s))\,\dd s.
\]
Therefore $\widetilde{F}x(t)\ge c_m\|\widetilde{F}x\|$ on $[0,T].$ Hence
$\widetilde{F}(\overline{B_{R_2}})\,{\subset}\,P.$
The compactness of $\widetilde{F}$ follows in a~standard way from the Arzel\`{a}-Ascoli theorem.
\end{proof}

\begin{lemma}\label{le-ind-1c}
Assume that \thetag{H0} and \thetag{H1} hold, $\widetilde{F}\,x\,{\ne}\,x$ for
$x\,{\in}\,\partial B_{R_2}$ and there exists a~continuous function $g_1$
such that
\begin{align}\tag{H2}
    &f_m(t,x)\le g_1(t) \quad\mbox{for\ } t\in[0,T] \mbox{\ and\ } x\in[c_m\,R_2,R_2]
  \\\noalign{\noindent\mbox{and moreover either}}\tag{H3}
   &\delta_*\le c_m\,R_2,
\\[-2mm]\noalign{\noindent\mbox{or}}\tag{H4}
   &\delta^*\le  R_2,
\\[-2mm]\noalign{\noindent\mbox{where}}\nonumber
   &\delta(t)=\int_0^T G_m(t,s)\,g_1(s)\,\dd s \quad\mbox{for \ } t\in [0,T].
\end{align}
Then $i_P(\widetilde{F},B_{R_2})=1.$
\end{lemma}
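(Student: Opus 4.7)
My plan is to apply Theorem~\ref{lemind}(i) directly. That is, I will argue by contradiction: suppose that there exist $x\in\partial B_{R_2}$ (hence $\|x\|=R_2$) and $\lambda\ge 1$ such that $\widetilde F x=\lambda x$, and then derive $\lambda\le 1$ which, combined with the standing assumption $\widetilde F x\ne x$ on $\partial B_{R_2}$, yields the contradiction.

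The first step is to replace $\widetilde f_m$ by $f_m$ inside the integral. Since $x\in P$ and $\|x\|=R_2$, the cone definition gives $x(s)\in[c_m R_2,R_2]\subset[c_m R_1,R_2]$ for every $s\in[0,T]$, so by the construction of $\widetilde f_m$ and by hypothesis (H2) one has
\[
  \widetilde f_m(s,x(s)) = f_m(s,x(s)) \le g_1(s), \qquad s\in[0,T].
\]
Multiplying by $G_m(t,s)\ge 0$ (property (G2)) and integrating yields
\[
  \lambda\,x(t) = \widetilde F x(t) \le \int_0^T G_m(t,s)\,g_1(s)\,\dd s = \delta(t)
  \qquad\text{for every } t\in[0,T].
\]

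Now I split according to which of (H3) or (H4) is assumed. If (H3) holds, I take the pointwise minimum over $t$ on both sides (valid since $f\le g$ pointwise implies $\min f\le \min g$) to obtain $\lambda x_*\le\delta_*\le c_m R_2$; since $x\in P$ gives $x_*\ge c_m\|x\|=c_m R_2$, it follows that $\lambda c_m R_2\le c_m R_2$, hence $\lambda\le 1$. If instead (H4) holds, I take the pointwise maximum to obtain $\lambda R_2=\lambda\|x\|\le\delta^*\le R_2$, again forcing $\lambda\le 1$. In both cases $\lambda=1$ is the only remaining possibility, but this would mean $\widetilde F x=x$, contradicting the standing hypothesis. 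Therefore the assumption of Theorem~\ref{lemind}(i) is satisfied and $i_P(\widetilde F,B_{R_2})=1$.

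I do not expect a serious obstacle: Lemma~\ref{Fpositive} already provides the compactness of $\widetilde F$ and the mapping $\widetilde F(\overline{B_{R_2}})\subset P$, and the substitution $\widetilde f_m=f_m$ on the relevant slice is immediate from the cone estimate $x(s)\ge c_m\|x\|$. The only subtle point is keeping the two alternative assumptions (H3) and (H4) correctly paired with the min/max operations applied to the inequality $\lambda x(t)\le\delta(t)$; this is precisely where the explicit $t$-dependence of the nonlinearity, captured by $g_1$, is used in the spirit of Lan and Webb.
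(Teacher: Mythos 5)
Your proof is correct and follows essentially the same route as the paper: apply Theorem~\ref{lemind}(i), use the cone estimate $c_m R_2\le x(s)\le R_2$ on $\partial B_{R_2}$ to identify $\widetilde f_m$ with $f_m$ and invoke (H2), then take $\min$ or $\max$ depending on whether (H3) or (H4) holds. The only cosmetic difference is that the paper first excludes $\lambda=1$ (using $\widetilde F x\ne x$) and works with a strict inequality $\lambda>1$, whereas you derive $\lambda\le 1$ and then exclude $\lambda=1$ at the end; these are logically equivalent.
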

\begin{proof} It follows from Lemma \ref{Fpositive} that $\widetilde{F}$ maps
$\overline{B_{R_2}}$ into $P$ and is compact. We show that $\widetilde{F}x\ne\lambda\,x$
for $x\in\partial B_{R_2}$ and $\lambda\,{\ge}\,1,$ which by Theorem \ref{lemind}~(i)
(where we put $\Omega_P\,{=}\,B_{R_2}$) yields $i_P(\widetilde{F},B_{R_2})\,{=}\,1.$

If not, there exist $\lambda\,{\ge}\,1$ and $x\,{\in}\,P$ with $\|x\|=R_2$ such that
$\widetilde{F}\,x=\lambda\,x.$ Since we suppose that $\widetilde{F}\,x\ne x$ for
$x\,{\in}\,\partial B_{R_2},$ we may assume that $\lambda\,{>}\,1.$ Consequently,
$c_m\,R_2\,{\le}\,x(t)\,{\le}\,R_2$ for $t\,{\in}\,[0,T],$ and, since $G_m(t,s)\,{>}\,0,$
hypothesis (H2) implies
\begin{equation*}
  \lambda\,x(t)=\int_0^T G_m(t,s)\,\widetilde{f}_m(s,x(s))\,\dd s
  \le\int_0^T G_m(t,s)\,g_1(s)\,\dd s=\delta(t).
\end{equation*}
If \thetag{H3} holds then we obtain
\[
   \lambda\,c_m\,R_2\le\lambda\,x_*
  =\min_{t{\in}[0,T]}\int_0^T G_m(t,s)\,\widetilde{f}_m(s,x(s))\,\dd s
  \le\delta_*\le c_m\,R_2\,,
\]
a contradiction since $\lambda>1.$ If \thetag{H4} holds, then taking the maximum on $[0,T]$
we obtain
\ $\lambda\,R_2=\lambda\,x^*\le\delta^*\le R_2,$ \ a contradiction.
\end{proof}

\begin{lemma}\label{le-ind-0} Assume that \thetag{H0} and \thetag{H1} hold, $\widetilde{F}\,x\,{\ne}\,x$
for $x\in\partial B'_{R_1}$ and there exists a~continuous function $g_0$ such that
\begin{align}\tag{H5}
    &f_m(t,x)\ge g_0(t)\ge 0 \mbox{ \ for\ } t\in [0,T] \mbox{ \ and\ } x\in [c_m\,R_1,R_1],
  \\\noalign{\noindent\mbox{and moreover either}}\tag{H6}
    &\gamma_*\ge c_m\,R_1,
  \\\noalign{\noindent\mbox{or}}\tag{H7}
    &\gamma^*\ge  R_1,
  \\\noalign{\noindent\mbox{where}}\nonumber
    &\gamma(t)=\int_0^T G_m(t,s)\,g_0(s)\,\dd s \quad\mbox{for \ } t\in [0,T].
\end{align}
Then $i_{P}(\widetilde{F},B'_{R_1})=0.$
\end{lemma}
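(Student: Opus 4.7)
The plan is to invoke Theorem~\ref{lemind}\,(ii) with the choice $x_0\equiv 1$, which lies in $P\setminus\{0\}$ since $c_m<1$. Lemma~\ref{Fpositive} already tells us $\widetilde{F}\colon\overline{B_{R_2}}\to P$ is compact, and $\overline{B'_{R_1}}\subset\overline{B_{R_2}}$ because $R_1<R_2$, so it remains to exclude the equation $x-\widetilde{F}x=\lambda\cdot 1$ on $\partial B'_{R_1}$ for every $\lambda\ge 0$.

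A preliminary step localises $x$ on the boundary. For $x\in\partial B'_{R_1}$ one has $x_*=c_m R_1$, and the cone condition $x(t)\ge c_m\|x\|$ forces $\|x\|\le x_*/c_m=R_1$, so
\[
   c_m R_1\le x(t)\le R_1 \quad \mbox{for all } t\in[0,T].
\]
Since $R_1<R_2$, $\widetilde{f}_m$ coincides with $f_m$ on the relevant strip, so by (H5) and the positivity (G2) of $G_m$ we get the pointwise comparison $\widetilde{F}x(t)\ge\gamma(t)$ on $[0,T]$.

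Now I would argue by contradiction. Suppose $x-\widetilde{F}x=\lambda$ (the constant function) for some $x\in\partial B'_{R_1}$ and $\lambda\ge 0$. If $\lambda=0$, then $\widetilde{F}x=x$, contradicting the standing hypothesis of the lemma; so $\lambda>0$. Under (H6), taking minima and using $\widetilde{F}x\ge\gamma$ gives
\[
   c_m R_1 = x_* = (\widetilde{F}x)_*+\lambda \ge \gamma_*+\lambda \ge c_m R_1+\lambda,
\]
which is impossible. Under (H7), taking maxima together with $x^*=\|x\|\le R_1$ gives
\[
   R_1 \ge x^* = (\widetilde{F}x)^*+\lambda \ge \gamma^*+\lambda \ge R_1+\lambda,
\]
again impossible. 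Either alternative contradicts $\lambda>0$, so Theorem~\ref{lemind}\,(ii) yields $i_P(\widetilde{F},B'_{R_1})=0$.

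The only step requiring any care is the pointwise sandwich $c_m R_1\le x(t)\le R_1$ on $\partial B'_{R_1}$; this is exactly the advantage of the Lan-type set $B'_{R_1}$ over the ball $B_{R_1}$, since the cone property supplies the upper bound $\|x\|\le R_1$ needed to apply (H5), while the boundary condition $x_*=c_m R_1$ supplies the lower bound that makes the comparison with (H6) tight. Everything else is a routine propagation of (H5)--(H7) through the representation~\eqref{wtF}.
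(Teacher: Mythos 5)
Your proof is correct and follows essentially the same route as the paper: both invoke Theorem~\ref{lemind}\,(ii) with $x_0\equiv 1$, localise boundary points via $c_mR_1\le x(t)\le R_1$ (using the cone inequality to obtain the upper bound), and derive a contradiction from $\lambda>0$ by taking minima under (H6) or maxima under (H7). Your version is slightly more explicit about the cone estimate $\|x\|\le x_*/c_m$ and about why $\widetilde{f}_m$ agrees with $f_m$ on the relevant strip, but the argument is the same.
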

\begin{proof} By Lemma~\ref{Fpositive}, the operator
\ $\widetilde{F}\,{:}\,\overline{B'_{R_1}}\,{\subset}\,\overline{B_{R_1}}\to P$ is compact.
Put $x_0(t)\,{=}\,1$ for $t\in [0,T].$ Then $x_0\,{\in}\,P$ and we shall prove that
$x\,{-}\,\widetilde{F}\,x\,{\ne}\,\lambda\,x_0$ for all $x\in\partial B'_{R_1}$ and
$\lambda\,{\ge}\,0,$ which by Theorem~\ref{lemind}~(ii) (where we put $\Omega'_P\,{=}\,B'_{R_1}$)
implies that $i_{P}(\widetilde{F},B'_{R_1})=0.$

If not, there exist $x\in\partial B'_{R_1}$ and $\lambda\ge 0$ such that
$x=\widetilde{F}\,x+\lambda\,x_0.$ Due to our assumption  $\widetilde{F}\,x\,{\ne}\,x$
for $x\in\partial B'_{R_1},$ it is enough to consider $\lambda>0.$ Furthermore, note that
$x\in\partial B'_{R_1}$ means that $x\,{\in}\,P$ and $x_*\,{=}\,c_m\,R_1.$

In particular, $x\in\partial B'_{R_1}$ implies that $c_m\,R_1\le x(t)\le R_1$ for $t\in [0,T].$
Then due to \thetag{H5} and the positivity of Green's function $G_m$ we have for $t\in [0,T]$
that
\begin{align*}
  x(t)&=\widetilde{F}\,x(t)+\lambda x_0(t)
  =\int_0^T G_m(t,s)\widetilde{f}_m(s,x(s))\,\dd s+\lambda
 \\
  &\ge\int_0^T G_m(t,s)\,g_0(s)\,\dd s+\lambda=\gamma(t)+\lambda.
\end{align*}
Now, if \thetag{H6} holds then, using Lemma~\ref{green} we get the contradiction
\[
   c_m\,R_1=x_*\ge\gamma_*+\lambda>\gamma_*\ge c_m\,R_1.
\]
On the other hand, if \thetag{H7} holds, using again Lemma~\ref{green} we arrive
at the contradiction
\[
  R_1\ge x^*\ge\gamma^*+\lambda>\gamma^*\ge R_1.
\]
\vskip-8mm\end{proof}

\begin{remark}\label{Rem7}
Changing slightly the proof of Lemma \ref{le-ind-0} we could prove that
\ $i_{P}(\widetilde{F},B_{R_1})=0$ holds also under (H0),(H1),(H5) and (H7).
Furthermore, it is worth mentioning that (H7) is equivalent to
\begin{equation}\tag{H7'}
   \mbox{there exists \ } t_0\in[0,T] \mbox{ \ such that\ } \gamma(t_0)\ge R_1,
\end{equation}
a condition that is, in general, more readily verifiable.
\end{remark}

\section{Main Results}

Now we are ready to formulate and prove our main results.

\begin{theorem}\label{main}
If the hypotheses \thetag{H0}-\thetag{H3}, \thetag{H5} and \thetag{H6} hold, then
problem~\eqref{eqprob} has at least one solution $x$ such that $c_m\,R_1\le x(t)\le R_2$ on $[0,T].$
The same conclusion remains true with \thetag{H3} replaced by \thetag{H4} and/or \thetag{H6}
replaced by \thetag{H7}.
\end{theorem}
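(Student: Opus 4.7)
The plan is to derive the theorem from the existence principle Theorem \ref{ex-principle}, applied with $\Omega_P = B_{R_2}$, $\Omega'_P = B'_{R_1}$, and the compact map $\widetilde{F}: \overline{B_{R_2}} \to P$ provided by Lemma \ref{Fpositive}. Since $B'_{R_1} \subset B_{R_1}$ and $R_1 < R_2$, we have $\overline{B'_{R_1}} \subset B_{R_2}$, and clearly $0 \in B'_{R_1}$, so the structural hypotheses of Theorem \ref{ex-principle} are met.

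The index lemmas of Section 3 require $\widetilde{F}x \ne x$ on the relevant boundaries, so I would first dispose of the degenerate case. If $\widetilde{F}$ already has a fixed point $x$ on $\partial B_{R_2} \cup \partial B'_{R_1}$, the cone condition $x(t) \ge c_m \|x\|$ combined with the boundary constraint places $x$ inside the strip $c_m R_1 \le x(t) \le R_2$: on $\partial B_{R_2}$ one has $c_m R_2 \le x(t) \le R_2$, while on $\partial B'_{R_1}$ one has $\|x\| \le R_1$ and $x_* = c_m R_1$, giving $c_m R_1 \le x(t) \le R_1$. Since $\widetilde{f}_m \equiv f_m$ on this strip by construction, $\widetilde{F}x = Fx$, so $x$ is already a solution of \eqref{eqprob+m}, hence of \eqref{eqprob}, with the required bounds.

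In the complementary case $\widetilde{F}x \ne x$ on both boundaries, and I can invoke the two index lemmas directly: Lemma \ref{le-ind-1c} (with (H2) together with (H3) or (H4)) yields $i_P(\widetilde{F}, B_{R_2}) = 1$, and Lemma \ref{le-ind-0} (with (H5) together with (H6) or (H7)) yields $i_P(\widetilde{F}, B'_{R_1}) = 0$. Theorem \ref{ex-principle} then produces a fixed point $x \in B_{R_2} \setminus \overline{B'_{R_1}}$. Membership in $B_{R_2} \cap P$ forces $\|x\| \le R_2$, while $x \notin \overline{B'_{R_1}}$ forces $x_* \ge c_m R_1$, so $c_m R_1 \le x(t) \le R_2$ on $[0,T]$. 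On this range $\widetilde{f}_m$ coincides with $f_m$, so $\widetilde{F}x = Fx$, and $x$ solves \eqref{eqprob+m}, which by the shifting argument of Section 2 is equivalent to \eqref{eqprob}.

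The only non-routine step is the bookkeeping around boundary fixed points in the second paragraph: one has to check that any such point automatically lies in the strip where $\widetilde{F}$ and $F$ agree, and this follows cleanly from the cone inequality $x(t) \ge c_m \|x\|$ built into the definition \eqref{sposcone} of $P$, together with the inclusion $B'_{R_1} \subset B_{R_1}$. Everything else is a direct assembly of the preparatory lemmas.
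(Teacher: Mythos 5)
Your proposal is correct and follows essentially the same route as the paper's proof: dispose of the boundary fixed-point case directly, otherwise combine Lemma \ref{Fpositive}, Lemma \ref{le-ind-1c}, Lemma \ref{le-ind-0} and Theorem \ref{ex-principle} with $\Omega_P=B_{R_2}$, $\Omega'_P=B'_{R_1}$, then translate membership in $\overline{B_{R_2}}\setminus B'_{R_1}$ into the pointwise bounds $c_m R_1\le x(t)\le R_2$ where $\widetilde f_m$ and $f_m$ agree. The only difference is that you spell out the inclusion $\overline{B'_{R_1}}\subset B_{R_2}$ and the boundary bookkeeping a bit more explicitly than the paper does, which is harmless.
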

\begin{proof}
Let the operators $F$ and $\widetilde{F}$ be respectively given by \eqref{F-def}
and \eqref{wtF}.  Recall that $F$ and $\widetilde{F}$ coincide on
$\overline{B_{R_2}}\setminus B'_{R_1}$ and thus each fixed point of $\widetilde{F}$ in $\overline{B_{R_2}}\setminus B'_{R_1}$ is a solution of \eqref{eqprob}.

Now, if $\widetilde{F}\,x=x$ for some $x\in\partial\,B_{R_2}\cup\partial\,B'_{R_1},$ then
this $x$ is a solution to our problem. On the other hand, if $\widetilde{F}\,x\ne x$ for all
$x\in\partial\,B_{R_2}\cup\partial\,B'_{R_1},$ then, $i_P(\widetilde{F},B_{R_2})\,{=}\,1$
by Lemma~\ref{le-ind-1c} and $i_P(\widetilde{F},B'_{R_1})\,{=}\,0$ by Lemma \ref{le-ind-0}.
Therefore, by Theorem \ref{ex-principle} (where we put $\Omega_P\,{=}\,B_{R_2}$ and
$\Omega'_P\,{=}\,B'_{R_1}$), the operator $\widetilde{F}$ has a fixed point in
$B_{R_2}\,{\setminus}\,\overline{B'_{R_1}}.$ To summarize, problem \eqref{eqprob} has
a solution $x\in\overline{B_{R_2}}\,{\setminus}\,B'_{R_1}.$ Finally, notice that
$x\,{\in}\overline{B_{R_2}}\,{\setminus}\,B'_{R_1}$ if and only if $x\,{\in}\,P$
and $c_m\,R_1\le x(t)\le R_2$ on $[0,T].$ This completes the proof of the theorem.
\end{proof}

\begin{remark}\label{rem-main}
We would like to emphasize that the assumptions of Theorem \ref{main} are weaker than those
of Theorem 3.2 of \cite{citz}. In particular, if $f_m(t,x)\le m^2R_2$ for $t\in[0,T]$ and
$x\in [c_m\,R_2,R_2]$ then (H2) and (H4) of Theorem \ref{main} are satisfied with $g_1(t)=m^2R_2.$
Clearly, hypotheses (H2)--(H4) also hold if there exists a continuous function $\hat{g},$
such that $f_m(t,x)\le\hat{g}(t)R_2$ for $t\in[0,T]$ and $x\in[c_m\,R_2,R_2]$ with
\begin{align*}
   &\min_{t\in [0,T]}\int_0^T G_m(t,s)\,\hat{g}(s)\,\dd s\le c_m
  \\\noalign{\noindent\mbox{or}}
   &\max_{t\in [0,T]}\int_0^T G_m(t,s)\,\hat{g}(s)\,\dd s\le 1.
\end{align*}
Moreover, if $f_m(t,x)\ge m^2R_1$ for $t\in[0,T]$ and $x\in[c_m\,R_1,R_1]$ then
\thetag{H5}-\thetag{H7} of Theorem \ref{main} are fulfilled with $g_0(t)=m^2\,R_1.$
For similar type of comparisons see~\cite{klamc04, jeff}.
\end{remark}

\begin{remark} Notice that the nature of our approach does not allow us to derive direct conclusions regarding the uniqueness of the solution in Theorem \ref{main}. On the other hand, Theorem \ref{main} provides a localization $x\in\overline{B_{R_2}}\,{\setminus}\,B'_{R_1}$
of the corresponding solution $x$ to problem~\eqref{eqprob}, which means that
\begin{equation}\label{local}
   x\,{\in}\,P \quad\mbox{and}\quad c_m\,R_1\le x(t)\le R_2 \mbox{ \ for\ } t\in[0,T].
\end{equation}
Having in mind Remark~\ref{Rem7}, it is not difficult to modify the proof of Theorem \ref{main}
so that, under the assumptions \thetag{H0}, \thetag{H1}, \thetag{H2}, \thetag{H3}
(or \thetag{H4}), \thetag{H5} and \thetag{H7}, we obtain the existence of a solution
to problem \eqref{eqprob} in the set $\overline{B_{R_2}}\setminus B_{R_1},$ where
in comparison with the localization from Theorem~\ref{main}, the set $B'_{R_1}$
is replaced by $B_{R_1}.$ Observe that this means that the corresponding solution $x$
of the given problem will satisfy in addition to \eqref{local} also the condition $x^*\ge R_1.$
\end{remark}

Next, we will apply Theorem \ref{main} to provide  sufficient conditions for the existence
of positive solutions of problem~\eqref{eqprobreg}, which is a special case of \eqref{eqprob} with
\[
   r(t)=\frac{e(t)}{\mu},\quad
   s(t)=\frac{c}{\mu},\quad
   \alpha=1-2\mu \mbox{ \ and \ } \beta=1-\mu\,.
\]

Recall that it was proved in~\cite{cpt} that the necessary condition for the existence of a positive
solution of~\eqref{eqprobreg} is $\overline{e}>0,$ and all existence theorems known up to now
(cf. \cite{torres,cpt,citz}) required $e$ to be strictly positive on $[0,T].$ However, it is easy
to verify that for
\begin{equation}\label{example}
    e(t)=0.1 V_0 +1.8+(2.1-V_0) \cos\,t-3 \cos^2t,
    T=2\,\pi,  a=0, b=2  \mbox{\ and\ } c=0.1,
\end{equation}
with $V_0>3,$ we have $\overline{e}>0$ and $e_*<0,$  while
\[
 u(t)=(V_0-2)+\cos\,t,
\]
is a solution to the corresponding problem \eqref{eqprobsing} (Example \eqref{example} is in fact,
a slight modification of the example by G. Propst from \cite[\thetag{19}]{propst}).This indicates
that the positivity of $e$ can not be a necessary condition for the existence of positive
solutions to problem \eqref{eqprobsing} and it should be weakened. To our knowledge, next existence
principle allows to deal for the first time also with the case $e_*\le 0.$

\begin{theorem}\label{Liebau}
Suppose that
\begin{equation}\tag{C0}
   a\ge0, \,\, 0<\mu<\tfrac 12, \,\,  c>0, \,\, T>0, \mbox{ \ and\ }
   e\,{:}\,[0,T]\to\mathbb R \mbox{\ is continuous}.
\end{equation}
Moreover, assume that there are $m\,{>}\,0,\, \kappa\,{>}\,0,$ and $R_1,R_2\,{>}\,0$ with
$R_1\,{<}\,R_2$ such that \eqref{antimax} holds and
\begin{align}\tag{C1}
    &e_*\le 0 \mbox{ \ and \ }
    (c_m\,R_1)^{\,\mu}\ge\displaystyle\frac{c+\sqrt{c^2-4\,\mu\,m^2 e_*}}{2\mu m^2}\,,
 \\\tag{C2}
   &(c_m R_1)^{1-2\mu}\ge\kappa\,\mu\,,
 \\\tag{C3}
   &\int_0^T G_m(s,s)\,e_+(s)\,\dd s\ge\frac{c_m\,R_1}{\kappa}\,,
 \\\tag{C4}
   &e^*\le c\,R_2^{\,\mu}\,.
\end{align}

Then there exists a positive solution of problem~\eqref{eqprobreg}.
\end{theorem}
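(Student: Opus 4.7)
The plan is to apply Theorem~\ref{main} to problem~\eqref{eqprobreg}, viewed as the special case of~\eqref{eqprob} with $r(t)=e(t)/\mu$, $s(t)=c/\mu$, $\alpha=1-2\mu$, $\beta=1-\mu$; the corresponding nonlinearity is $f_m(t,x)=(e(t)/\mu)\,x^{1-2\mu}-(c/\mu)\,x^{1-\mu}+m^2 x$. I would verify (H0), (H1), (H2), (H4), (H5) and (H6) with the choices
\[
   g_1(t)\equiv m^2 R_2
   \quad\mbox{and}\quad
   g_0(t)=\frac{(c_m R_1)^{1-2\mu}}{\mu}\,e_+(t);
\]
(H0) is immediate from (C0) together with $0<\mu<1/2$, and the bound on $m$ in (H1) is exactly~\eqref{antimax}. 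An observation that will be used throughout is that, since $e_*\le 0$, hypothesis~(C1) yields
\[
   (c_m R_1)^\mu\ge\frac{c+\sqrt{c^2-4\mu m^2 e_*}}{2\mu m^2}\ge\frac{c}{\mu m^2},
\]
and hence also $(c_m R_2)^\mu\ge c/m^2$, because $R_2>R_1$ and $\mu<1$.

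For the positivity part of (H1), I would factor $f_m(t,x)=x^{1-2\mu}\,Q(t,x^\mu)$ with $Q(t,y)=m^2 y^2-(c/\mu)y+e(t)/\mu$. The larger real root $y_+(t)=(c+\sqrt{c^2-4\mu m^2 e(t)})/(2\mu m^2)$ is decreasing in $e(t)$ (and if no real root exists, $Q>0$ identically), so (C1) reads precisely $(c_m R_1)^\mu\ge y_+(e_*)\ge y_+(t)$ for every $t$, which yields $Q(t,x^\mu)\ge 0$ whenever $x\in[c_m R_1,R_2]$.

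The hardest step, I expect, is (H2)/(H4): the aim is the pointwise bound $f_m(t,x)\le m^2 R_2$ on $[0,T]\times[c_m R_2,R_2]$, from which (G3) gives $\delta\equiv R_2$ and hence (H4). The strategy is to write
\[
   f_m(t,x)-m^2 R_2=\frac{x^{1-2\mu}}{\mu}\bigl(e(t)-c\,x^\mu\bigr)-m^2(R_2-x)
\]
and then chain three estimates: (C4) gives $e(t)-c\,x^\mu\le c(R_2^\mu-x^\mu)$; the mean value theorem applied to $z\mapsto z^\mu$ on $[x,R_2]$ (using $\mu<1$) gives $R_2^\mu-x^\mu\le\mu\,x^{\mu-1}(R_2-x)$; and the preliminary observation gives $c\,x^{-\mu}\le c(c_m R_2)^{-\mu}\le m^2$. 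Together these force the first term to be at most $m^2(R_2-x)$, cancelling the second.

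Finally, for (H5) I would split on the sign of $e(t)$: when $e(t)\ge 0$, use $x^{1-2\mu}\ge(c_m R_1)^{1-2\mu}$ on $x\in[c_m R_1,R_1]$ for the first term of $f_m$ and the preliminary observation to see that $x(m^2-(c/\mu)x^{-\mu})\ge 0$; when $e(t)<0$, $g_0(t)=0$ and one falls back on the positivity from (H1). For (H6), (G4) and (C3) give
\[
   \gamma_*\ge\int_0^T G_m(s,s)\,g_0(s)\,\dd s=\frac{(c_m R_1)^{1-2\mu}}{\mu}\int_0^T G_m(s,s)\,e_+(s)\,\dd s\ge\frac{(c_m R_1)^{2-2\mu}}{\mu\kappa},
\]
and the last expression is $\ge c_m R_1$ precisely under (C2). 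Theorem~\ref{main} then produces a solution $x$ of~\eqref{eqprobreg} with $c_m R_1\le x(t)\le R_2$, which is the required positive solution.
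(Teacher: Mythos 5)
Your proposal is correct and follows the same overall strategy as the paper: specialize~\eqref{eqprob} via $r=e/\mu$, $s=c/\mu$, $\alpha=1-2\mu$, $\beta=1-\mu$, choose $g_1\equiv m^2 R_2$, pick $g_0$ proportional to $e_+$, and invoke Theorem~\ref{main} after checking (H0)--(H2), (H4)--(H6). The differences are localized and technical. First, you take $g_0(t)=\tfrac{(c_m R_1)^{1-2\mu}}{\mu}e_+(t)$ rather than the paper's $g_0(t)=\kappa\,e_+(t)$; by~(C2) your choice is the larger one, which makes~(H5) a slightly stronger claim (still true, as you verify term by term) and lets~(H6) follow from~(C3) together with~(C2) by a single elementary estimate, whereas the paper uses $\kappa$ directly and cancels it against~(C3). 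Second, for~(H2) the paper rewrites the required inequality as $e^*\le\mu m^2 R_2 x^{2\mu-1}+c x^{\mu}-\mu m^2 x^{2\mu}$ and shows via a derivative computation that this expression is decreasing on $[c_m R_2,R_2]$, so its minimum is $c R_2^{\mu}\ge e^*$; your decomposition $f_m(t,x)-m^2R_2=\tfrac{x^{1-2\mu}}{\mu}(e(t)-cx^{\mu})-m^2(R_2-x)$ followed by (C4), the mean value theorem for $z\mapsto z^\mu$, and the observation $c(c_m R_2)^{-\mu}\le m^2$ reaches the same conclusion without any monotonicity argument, which is arguably more elementary and makes the role of each hypothesis more transparent. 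For~(H1) both arguments are essentially the factorization by $x^{1-2\mu}$ and the nonnegativity of a quadratic in $x^{\mu}$; you simply present it explicitly via its larger root. In short, the two proofs share the same skeleton, but your verification of~(H2) is a genuinely different, calculus-free route to the same bound.
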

\begin{proof}
We have
$f_m(t,x)=\displaystyle\frac{e(t)}{\mu}\,x^{1\,{-}\,2\,\mu}
          -\displaystyle\frac{c}{\mu}\,x^{1\,{-}\,\mu}+m^2 x.$
 We will check the hypotheses (H0)--(H2), (H4)--(H6) of Theorem \ref{main}.
Put $g_1(t)=m^2 R_2$ and $g_0(t)=\kappa\,e_+(t)$ for $t\in[0,T].$

Clearly, \thetag{H0} follows from \thetag{C0}. We will show that \thetag{H1} holds, as well.
Indeed, (C1) gives $\mu\,m^2 x^{2\mu}-c\,x^{\,\mu}+e_*\ge 0$ on $[c_m\,R_1,R_2].$ Hence
$x^{1-2\mu}\,(\mu\,m^2 x^{2\mu}\,{-}\,c\,x^{\,\mu}\,{+}\,e_*)\ge 0$ on $[c_m\,R_1,R_2].$
Consequently, we have
\[
  f_m(t,x)\ge\frac{e_*}{\mu}\,x^{1-2\,\mu}-\frac{c}{\mu}\,x^{1-\mu}+m^2 x\ge 0
  \quad\mbox{for \ } t\in[0,T] \mbox{ \ and \ } x\in[c_m R_1,R_2].
\]
This, together with \eqref{antimax} means that (H1) holds.

Now, we will show that \thetag{H2} follows from $\mu\in (0,\tfrac 12),$ \thetag{C1} and \thetag{C4}.
We need to prove that
\[
  e^*\le\mu\,m^2 R_2\,x^{2\mu{-}1}\,{+}\,c\,x^{\,\mu}\,{-}\,\mu\,m^2 x^{2\mu}
\]
for $x\in [c_m R_2,R_2].$ The function $\mu\,m^2 R_2\,x^{2\mu-1}+c  x^{\,\mu}-\mu\,m^2 x^{2\mu}$
is decreasing on $[c_m R_2,R_2],$ since by $\mu\in (0,\tfrac 12)$ and (C1) we get
\[
  (\mu\,m^2 R_2\,x^{2\mu-1}+c\,x^{\,\mu}-\mu\,m^2 x^{2\mu})'
  =\mu\,x^{\mu-1}\,((2\mu-1)\,m^2R_2\,x^{\mu-1}+c-2\mu\,m^2 x^{\,\mu})<0\,.
\]
From \thetag{C4} we obtain
\[
   e^*\le c\,R_2^{\,\mu}\le\mu\,m^2R_2\,x^{2\mu-1}+c\,x^{\,\mu}-\mu\,m^2 x^{2\mu},
\]
which proves \thetag{H2}. Due to Lemma~\ref{green}, $\delta^*=R_2$ in this case. Hence,
\thetag{H4} is fulfilled, as well.

Let $t\in[0,T]$ be such that $e(t)<0.$ Then, since $e_+(t)=0$ in this case, the inequality in \thetag{H5} obviously holds. Next, let $t\in[0,T]$
be such that $e(t)\ge 0.$ Then we need to show that
\begin{equation}\label{eq-e}
   e(t)\,(x^{1-2\mu}-\kappa\,\mu)\ge x^{1-\mu}\,(c-\mu\,m^2 x^{\,\mu})
\end{equation}
for $x\in [c_m\,R_1,R_1].$ Since
\[
  \frac{c+\sqrt{c^2-4\mu\,m^2 e_*}}{2\mu\,m^2}\ge\frac{c}{\mu\,m^2}\,,
\]
from \thetag{C1} and \thetag{C2} we get $c-\mu\,m^2 x^{\,\mu}\le 0$ and $x^{1-2\mu}-\kappa\,\mu\,\ge 0$
for $x\in [c_m\,R_1,R_1].$ This gives \eqref{eq-e}, and therefore \thetag{H5} holds.
\thetag{H6} follows immediately from \thetag{C3}.
\end{proof}

\begin{remark} Notice that in Theorem \ref{Liebau} the smaller are the difference $e^*-e_*$ and the period $T,$
the greater is the chance to find proper  $m,\kappa,R_1$ and $R_2.$

Moreover, one can verify that example \eqref{example} mentioned above does not satisfy the assumptions
of Theorem \ref{Liebau}. Possible extensions of our existence principle so that they would cover also
such examples remain an open problem.
\end{remark}

Next example is an illustrative application of Theorem \ref{Liebau}.

\begin{example}\label{ex-neg} Let us consider problem \eqref{eqprobreg} with the parameter
values $a=1.6,$ $\mu=0.01,$ $c=0.005,$ $T=1$ and being $e$ the continuous periodic function
defined on $[0,1]$ as follows
\[
  e(t)=
  \begin{cases}
    e_*+\dfrac{e^*-e_*}{t_1}\,t \quad \hbox{for}\quad t\in[0,t_1),\\
    e^*\quad \hbox{for}\quad t\in[t_1,t_2),\\
    e^*-\dfrac{e^*-e_*}{1-t_2}(t-t_2) \quad \hbox{for}\quad t\in[t_2,1],
  \end{cases}
\]
where $t_1=0.0005,$ $t_2=0.9995,$ $e^*=0.00548239$ and $e_*=-0.00005.$

Now, defining $m=0.7,$ $R_1=25,$ $R_2=10000$ and $\kappa=2200,$ direct computations show that all
the assumptions of Theorem \ref{Liebau} are satisfied and therefore there exists a positive solution
of problem~\eqref{eqprobreg} (note that, using explicit formulas from Appendix A.1 of \cite{citz} and the Mathematica software, we can
for $a=1.6$ and $m=0.7$ approximate the value of $c_m$ by 0.94144. Similarly we get $G_m(s,s)\approx 1.96026$).
\end{example}

In \cite[Example 3.7]{citz} we showed that our main result of \cite{citz}, namely Theorem 3.2,
is more general than  \cite[Theorem 1.8]{cpt} (see also \cite[Corollary 3.6]{citz}) when applied
to problem~\eqref{eqprobreg}. However, since the function $e(t)\equiv 1.54$ used
in Example 3.7 is constant, problem~\eqref{eqprobreg} has a constant solution, namely
\[
   x(t)=\left(\frac{e(t)}{c}\right)^\frac{1}{\mu}\equiv \left(\frac{1.54}{1.49}\right)^{100}=27.1297.
\]
Although \cite[Example 3.7]{citz} is correct, the case of $e$ constant is not meaningful for the pumping effect in the studied configuration
(see \cite[Definition 1.2]{cpt}). Now we present an application of Theorem \ref{main}
to problem~\eqref{eqprobreg} that allows a nonconstant positive function $e$ which, as it will
be shown in Example \ref{ex-mod}, does not satisfy the assumptions of \cite[Theorem 1.8]{cpt}.

\begin{theorem}\label{th}
Suppose that \thetag{C0} of Theorem $\ref{Liebau}$ holds, and moreover:
\begin{align}\tag{C5}
     &e_*>0,
   \\\tag{C6}
     &\mbox{there is \ } m>0 \mbox{\ such that \ }
      \frac{c^2}{\mu\,e_*^2}\left(\frac{1}{c_m^\mu}\,e^*-e_*\right)
      \le m^2<\left(\frac{\pi}{T}\right)^2+\left(\frac{a}{2}\right)^2.
\end{align}

Then problem~\eqref{eqprobreg} has a positive solution $x$ such that
\[
   0<x(t)\le\frac{1}{c_m}\left(\frac{e^*}{c}\right)^\frac{1}{\mu}
   \quad\mbox{for all \ } t\in [0,T].
\]
\end{theorem}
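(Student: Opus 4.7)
The plan is to apply Theorem~\ref{main} to~\eqref{eqprobreg}, which is the special case of~\eqref{eqprob} with $r(t)=e(t)/\mu$, $s(t)=c/\mu$, $\alpha=1-2\mu$ and $\beta=1-\mu$. Motivated by the stated upper bound on the solution, I set
\[
  R_2:=\frac{1}{c_m}\left(\frac{e^*}{c}\right)^{1/\mu},
\]
so that $c(c_m R_2)^\mu = e^*$, and choose $g_1(t)\equiv m^2R_2$ and $g_0(t)\equiv m^2R_1$, where $R_1\in(0,R_2)$ will be fixed below. Hypothesis (H0) is (C0). For (H2) and (H4), note that on $[c_m R_2,R_2]$ one has $cx^\mu\ge e^*\ge e(t)$, so $\tfrac{e(t)}{\mu}x^{1-2\mu}-\tfrac{c}{\mu}x^{1-\mu}\le 0$ and hence $f_m(t,x)\le m^2 x\le m^2R_2=g_1(t)$; by~(G3), $\delta\equiv R_2$, confirming (H4). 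Similarly $\gamma\equiv R_1$, so (H6) ($\gamma_*\ge c_m R_1$) is immediate.

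The heart of the proof is the verification of (H1), namely $f_m(t,x)\ge 0$ on $[0,T]\times[c_m R_1,R_2]$. Writing $y=x^\mu$, this is equivalent to the upward-opening quadratic $h_t(y):=\mu m^2 y^2-cy+e(t)$ being nonnegative on $[(c_m R_1)^\mu,R_2^\mu]$ uniformly in $t$; the worst case is $h_{t_*}(y)=\mu m^2 y^2-cy+e_*$. A direct substitution of $R_2^\mu=e^*/(cc_m^\mu)$ shows that $h_{t_*}(R_2^\mu)\ge 0$ rearranges to
\[
  \mu m^2(e^*)^2\ge c^2 c_m^\mu\bigl(e^*-c_m^\mu e_*\bigr),
\]
which, after multiplying~(C6) through by $(e^*/(c_m^\mu e_*))^2\ge 1$, is implied by~(C6). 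Since also $h_{t_*}(0)=e_*>0$ by (C5), a short case analysis on the real roots $\tilde y_\pm$ of $h_{t_*}$ (when they exist) and their position relative to $[0,R_2^\mu]$ shows that either $h_{t_*}\ge 0$ already holds on $[0,R_2^\mu]$ (so any small $R_1$ works), or $R_2^\mu\ge\tilde y_+$ and one is forced to take $R_1$ with $(c_m R_1)^\mu\ge\tilde y_+$; the feasibility $R_1<R_2$ in the latter case amounts to $\tilde y_+<e^*/c$, which is again a consequence of~(C6) by an analogous manipulation.

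For (H5), i.e.~$f_m(t,x)\ge m^2 R_1$ on $[c_m R_1,R_1]$, the estimate is easy in the first subcase: because $e_*>0$ and $1-2\mu>0$, for $R_1>0$ small the dominant term $\tfrac{e_*}{\mu}(c_m R_1)^{1-2\mu}$ exceeds $m^2 R_1=O(R_1)$; in the second subcase, where $R_1$ is forced close to $\tilde y_+^{1/\mu}/c_m$, the inequality is verified by a continuity/gap argument using the strict separation $\tilde y_+<e^*/c$. Applying Theorem~\ref{main} therefore produces a solution $x$ of~\eqref{eqprob}, hence of~\eqref{eqprobreg}, with $c_m R_1\le x(t)\le R_2$ on $[0,T]$, which gives the stated conclusion $0<x(t)\le c_m^{-1}(e^*/c)^{1/\mu}$. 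The principal obstacle throughout is (H1): (C6) is tuned precisely to place $R_2^\mu$ in a permissible position with respect to the roots of $h_{t_*}$, and the case analysis needed to convert this into pointwise nonnegativity on the whole interval $[(c_m R_1)^\mu, R_2^\mu]$ is where the main work lies.
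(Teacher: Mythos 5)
Your overall strategy matches the paper's: both proofs apply Theorem~\ref{main} with the same choices $R_2=c_m^{-1}(e^*/c)^{1/\mu}$, $g_1\equiv m^2R_2$, $g_0\equiv m^2R_1$, and your verification of \thetag{H0}, \thetag{H2}, \thetag{H4}, \thetag{H6} is correct and essentially identical. The gap is in \thetag{H1} (and consequently in the second subcase of \thetag{H5}).

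From \thetag{C6} you extract only the pointwise fact $h_{t_*}(R_2^\mu)\ge 0$, which together with $h_{t_*}(0)=e_*>0$ does not exclude the convex parabola dipping below zero in the interior; hence your case analysis on the roots $\tilde y_\pm$. But \thetag{C6} is strictly stronger than $h_{t_*}(R_2^\mu)\ge 0$: in fact it is exactly the condition that
\[
e_*-cR_2^\mu+\mu m^2\left(\frac{e_*}{c}\right)^2\ \ge\ 0,
\]
and since on the interval $y\in[e_*/c,\,R_2^\mu]$ one has $-cy\ge -cR_2^\mu$ and $\mu m^2y^2\ge\mu m^2(e_*/c)^2$ separately, this is a \emph{uniform} lower bound for $h_{t_*}(y)$ on that whole interval. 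Combined with the trivial $h_{t_*}(y)\ge e_*-cy\ge 0$ for $y\in[0,e_*/c]$, one gets $h_{t_*}\ge 0$ on all of $[0,R_2^\mu]$ in one step, so your second subcase never occurs and any small $R_1$ is admissible. This one-step monotone estimate over the interval is the paper's argument, and it is what actually makes the proof close.

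Moreover, your treatment of the second subcase is itself flawed: you assert that the feasibility condition $\tilde y_+<e^*/c$ ``is again a consequence of \thetag{C6}.'' This is backwards. When the roots exist, the fact just established ($h_{t_*}\ge 0$ on $[0,R_2^\mu]$) forces $\tilde y_-\ge R_2^\mu$, hence $\tilde y_+>R_2^\mu=e^*/(cc_m^\mu)>e^*/c$ --- the opposite inequality. So the second branch is both vacuous under \thetag{C6} and, as written, incorrectly justified; the accompanying ``continuity/gap argument'' for \thetag{H5} therefore cannot be made to work. Once you replace the check of $h_{t_*}(R_2^\mu)\ge 0$ by the uniform interval bound, the case analysis disappears, you may take $R_1$ small enough that $(1-c_m)\mu m^2R_1^{2\mu}+cR_1^\mu\le e_*c_m^{1-2\mu}$, and a short computation then gives $f_m(t,x)\ge f_*(x)\ge m^2R_1$ on $[c_mR_1,R_1]$, completing \thetag{H5}.
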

\begin{proof} We will verify that the assumptions of Theorem \ref{main} are satisfied.

\noindent {\it Claim 1. \thetag{H0} and \thetag{H1} are true.}

Clearly, \thetag{H0} is a consequence of \thetag{C0}. Furthermore, let $m^2$ be as in
\thetag{C6} and let
\[
   R_2=\displaystyle\frac{1}{c_m}\left(\frac{e^*}{c}\right)^\frac{1}{\mu}.
\]
We have
\[
   f_m(t,x)=\frac{e(t)}{\mu}\,x^{1-2\mu}-\frac{c}{\mu}\,x^{1-\mu}+m^2 x
   \quad\mbox{for \ } t\in[0,T] \mbox{ \ and \ } x\ge 0.
\]
Notice that
\begin{equation}\label{r1}
\left\{
  \begin{array}{l}
   f_m(t,x)\ge f_*(x)\quad\mbox{for all \ } t\in[0,T] \mbox{\ and \ } x\ge 0,\mbox{ \ where}
  \\[2mm]\displaystyle
   f_*(x)=\frac{e_*}{\mu}\,x^{1-2\mu}-\frac{c}{\mu}\,x^{1-\mu}+m^2 x
         =\frac{x^{1-2\mu}}{\mu}\,\left(e_*-c\,x^{\,\mu}+m^2 \mu\,x^{2\mu}\right)\,.
  \end{array}
\right.
\end{equation}
Obviously, $f_*(x)\ge 0$ whenever $x\in[0,\left(\frac{e_*}{c}\right)^\frac{1}{\mu}].$
Let $x\in[\left(\frac{e_*}{c}\right)^\frac{1}{\mu},R_2].$ Taking into account \thetag{C6}, we get
\begin{align*}
  \mu\,x^{2\mu-1}f_*(x)&=e_*-c\,x^{\,\mu}+m^2\mu\,x^{2\mu}
  \ge e_*-c\,R_2^{\,\mu}+m^2\mu\left(\left(\frac{e_*}{c}\right)^\frac{1}{\mu}\right)^{2\mu}
 \\
  &=e_*-\frac{e^*}{c_m^{\,\mu}}+m^2 \mu\left(\frac{e_*}{c}\right)^{2}
   =\mu\left(\frac{e_*}{c}\right)^{2}\left(m^2-\frac{c^2}{\mu\,e_*^2}
    \left(\frac{e^*}{c^\mu_m}-e_*\right)\right)
   \ge 0.
\end{align*}
Thus $f_*(x)\ge 0$ for all $x\in[0,R_2]$ and hence
\[
  f_m(t,x)\ge 0 \quad\mbox{for \ } t\in[0,T] \mbox{ \ and \ } x\in[0,R_2].
\]
In particular, we conclude that \thetag{H1} is true (with an arbitrary $R_1\in(0,R_2)$).

\vskip2.5mm

\noindent {\it Claim 2. \thetag{H2} and \thetag{H4} are true.}

First, we will prove that $f_m(t,x)\le m^2 R_2$ for $t\in[0,T]$ and
$x\in[c_m\,R_2,R_2].$ Indeed, for $t\in[0,T]$ and $x\ge 0$ we have
\[
   f_m(t,x)\le\frac{e^*}{\mu}\,x^{1-2\mu}-\frac{c}{\mu}\,x^{1-\mu}+m^2x\,.
\]
Furthermore, if $x\ge c_m\,R_2=\left(\dfrac{e^*}{c}\right)^\frac{1}{\mu},$ then
\[
   \frac{e^*}{\mu}\,x^{1-2\mu}-\frac{c}{\mu}\,x^{1-\mu}
   =\frac{x^{1-2\mu}}{\mu}\,(e^*-c\,x^{\,\mu})
   \le\frac{x^{1-2\mu}}{\mu}\left(e^*-c\,\frac{e^*}{c}\right)=0\,.
\]
Therefore, for $t\in[0,T]$ and  $x\in[c_m\,R_2,R_2]$ we have
\[
   f_m(t,x)\le\frac{e^*}{\mu}\,x^{1-2\mu}-\frac{c}{\mu}\,x^{1-\mu}+m^2x\le m^2x\le m^2R_2.
\]
Now, it is easy to show (see also Remark \ref{rem-main}) that \thetag{H2} and \thetag{H4}
are true with $g_1(t)=m^2 R_2.$

\vspace{0.25cm}
\noindent {\it Claim 3. \thetag{H5} and \thetag{H6} are true.}

First, we will show that there is $R_1\in (0,R_2)$ such that $f_m(t,x)\ge m^2 R_1$ for
$t\in[0,T]$ and $x\in [c_m R_1,R_1].$ Indeed, due to \thetag{C5} we can choose $R_1\in (0,R_2)$
in such a way that
\begin{equation}\label{eqR1}
   (1-c_m)\,m^2 \mu R_1^{2\mu}+c  R_1^{\,\mu}\le e_*\,c_m^{1-2\mu}.
\end{equation}
On the other hand, for $t\in[0,T]$ and $x\in[c_m\,R_1,R_1]$ we deduce
\begin{align*}
  f_*(x)&=\frac{e_*}{\mu}x^{1-2\mu}-\frac{c}{\mu}x^{1-\mu}+m^2x
  \ge\frac{e_*}{\mu}(c_m\,R_1)^{1-2\mu}-\frac{c}{\mu}R_1^{1-\mu}+m^2c_m\,R_1
 \\
  &=\frac{R_1^{1-2\mu}}{\mu}(e_*\,c_m^{1-2\mu}-c\,R_1^{\,\mu}+c_m m^2 \mu R_1^{2\mu})
 \\\noalign{\noindent\mbox{and hence, by \eqref{eqR1},}}
  f_*(x)&\ge m^2 R_1,
\end{align*}
wherefrom, according to \eqref{r1}, the desired inequality immediately follows. Now, it is easy
to show (see also Remark \ref{rem-main}) that \thetag{H5} and \thetag{H6} are true with
$g_0(t)=m^2R_1.$

\vspace{2mm}

By Claims 1--3, all the assumptions of Theorem \ref{main} are satisfied and the proof
can be completed applying it.
\end{proof}

\begin{example}\label{ex-mod} Let us modify \cite[Example 3.7]{citz} in order to admit
a non-constant function $e.$ Indeed,  consider problem \eqref{eqprobreg} with $a=1.6,$
$\mu=0.01,$ $c=1.49,$ $T=1$ and $e$ a continuous periodic function such that $e_*=1.54.$
Notice that in this case we cannot apply \cite[Theorem 1.8]{cpt} since
\[
 \frac{\,c^2}{4\,\mu\,e_*}\approx 36.0406>10.5096\approx\left(\frac{\pi}{T}\right)^2{+}\,\frac{a^2}4\,.
\]
On the other hand, for $m=0.7$ we have $c_m\approx 0.9414$ and then Theorem~\ref{th}
implies that this problem has a positive solution $x$ provided that $e^*<1.5443.$ Moreover,
\[
   0<x(t)\le\frac{1}{c_m}\left(\frac{e^*}{c}\right)^\frac{1}{\mu}<38.0844
   \quad\mbox{for all \ } t\in [0,T].
\]

Finally we show some numerical examples for particular choices of $e$:

\medbreak
\vspace*{0.25cm}
\begin{minipage}[b]{0.475\linewidth}
\includegraphics[scale=0.77]{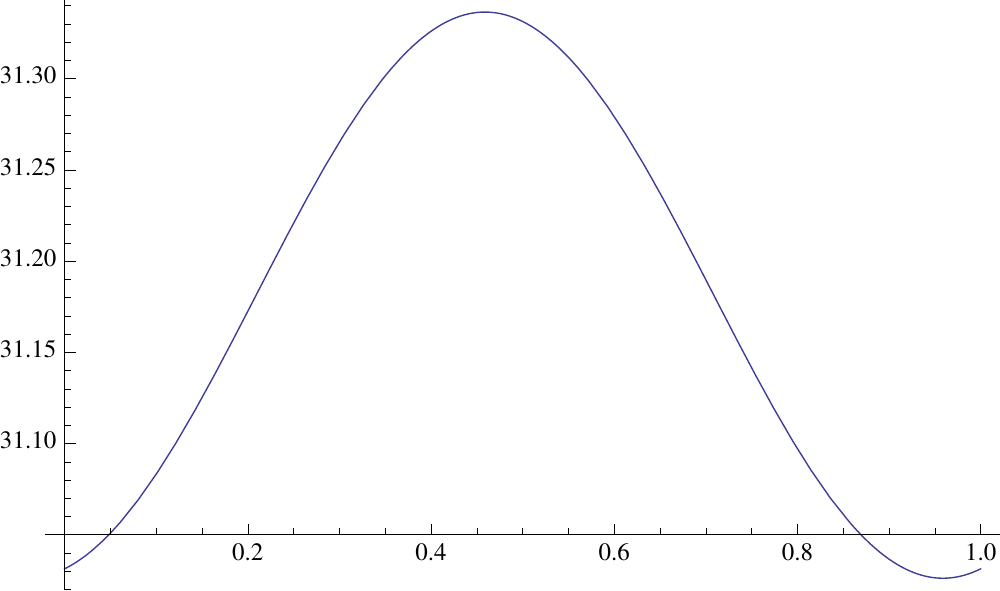}
\begin{center}
Solution of problem \eqref{eqprobreg} with $a=1.6,$ $\mu=0.01,$ $c=1.49,$ $T=1$ and
$e(t)=1.54215+0.002097\cos(2\pi\,t)$
\end{center}
\end{minipage} \hfill
\begin{minipage}[b]{0.475\linewidth}
\includegraphics[scale=0.77]{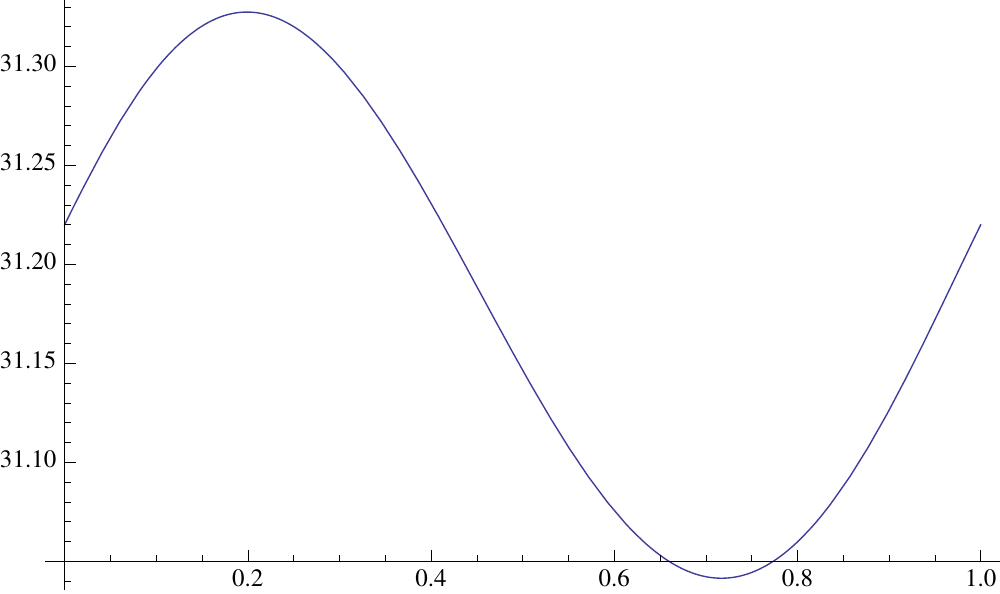}
\begin{center}
Solution of problem \eqref{eqprobreg} with $a=1.6,$ $\mu=0.01,$ $c=1.49,$ $T=1$ and
$e(t)=1.54215-0.02 (t-3 t^2+2 t^3)$
\end{center}
\end{minipage}
\end{example}
Some computations in Examples \ref{ex-neg} and \ref{ex-mod} were made with the help of the software
system \textsl{Mathematica}.

\section*{Acknowledgements}
We would like to thank an anonymous referee for his/her careful reading of the manuscript and the many remarks that improved the quality of the paper. In particular his/her insights lead us to example \eqref{example}.

J. A. Cid was partially supported by Ministerio de Educaci\'on y Ciencia, Spain, and FEDER,
Project MTM2013-43404-P,
G. Infante was partially supported by G.N.A.M.P.A. - INdAM (Italy),
M. Tvrd\'y was supported by GA \v{C}R Grant P201/14-06958S and  RVO: 67985840 and
M. Zima was partially supported by the Centre for Innovation and Transfer of Natural Science
and Engineering Knowledge of University of Rzesz\'ow.
This research was completed when M. Zima was visiting Departamento de Matem\'aticas,
Universidade de Vigo, Campus de Ourense, and Mathematical Institute,  Czech Academy
of Sciences in April, May and June 2016; the kind hospitality is gratefully acknowledged.

\end{document}